\theoremstyle{definition}
\newtheorem{defn}{Definition}[section]
\newtheorem{prop}{Proposition}[section]
\newtheorem{thm}{Theorem}[section]
\newtheorem{rem}{Remark}[section]
\newtheorem{ex}{Example}[section]
\newtheorem{lemma}[thm]{Lemma}
\newtheorem{que}{Question}
\title{Positroids are 3-colorable}
\author{Lamar Chidiac \and Winfried Hochst\"{a}ttler}
\date{March 2024}
\begin{document}

\maketitle
\begin{center}
	{\footnotesize
		Fakult\"at f\"ur Mathematik und Informatik, \\
		FernUniversit\"at in Hagen, Germany,\\
		\text{\{lamar.chidiac,winfried.hochstaettler\}@fernuni-hagen.de}
		\\\ \\
	}
\end{center}

\begin{abstract}
	We show that every positroid of rank $r \geq 2$ has a good
	coline. Using the definition of the chromatic number of oriented
	matroid introduced by J.\ Ne\v{s}et\v{r}il, R.\ Nickel, and W.~Hochst\"{a}ttler, this shows that every orientation of a positroid of rank at least $2$
	is $3$-colorable.
\end{abstract}

\section{Introduction}

A classical algebraic way to analyze the chromatic number of a graph
is to study Nowhere-Zero(NZ)-tensions, which we will call NZ-coflows,
as they form a concept dual to NZ-flows.  Flows and coflows
immediately generalize to regular oriented matroids and
Hochst\"{a}ttler \cite{HadwigerforHyperplanes} presented a
generalization of Hadwiger's famous conjecture that any graph which is
not $k$-colorable must have a $K_{k+1}$-minor to regular oriented
matroids. Goddyn and Hochst\"attler \cite{Boergerband} proved that
this conjecture is equivalent to Tutte's 4-Flow conjecture for $k=4$,
Tutte's 5-flow conjecture for $k=5$ and Hadwiger's conjecture for
$k\ge 6$.

A matroid is regular if and only if it has no minor isomorphic to $U_{2,4}$ (the
four point line), the Fano plane or the dual Fano plane. A first idea to generalize the theory of
NZ-coflows to general oriented matroids would be, to require that
summing the coflow-values along any cycle, taking the signs of the
elements into account, evaluates to zero. Alas, one has to cope with
the fact that using this definition no orientation of $U_{2,4}$ admits
a non-trivial coflow, in particular, there is no NZ-coflow. Therefore, one needs to take another approach to define the NZ-coflows for general oriented matroids. 
The coflow space may dually be defined as the linear hull of signed characteristic vectors of cocircuits. Using this and ideas of Hochst\"attler and
Ne\v{s}et\v{r}il \cite{honese}, Hochstättler and Nickel
\cite{theflowlatticeofOM, onthechromaticnumber} initiated a chromatic
theory of oriented matroids. Lacking the total unimodularity of the
matrix defining the cocircuit space, they considered the integer
lattice (chain group) generated by the signed characteristic vectors
of the cocircuits instead. It seems possible that the generalization of
Hadwiger's Conjecture to regular oriented matroids might be true even
for general orientable matroids.

While the case $k=2$ still is trivial for general oriented matroids,
already the case $k=3$, which was proven by Hadwiger in the graphic
(and regular) case, is open in general. The graphic case is easily
proved observing the fact that a simple graph without $K_4$-minor
always has a vertex of degree at most $2$. Therefore Goddyn,
Hochst\"{a}ttler and Neudauer~\cite{goddynetal}, introduced the class
of generalized series parallel ($GSP$) oriented matroids, which is an
$M(K_4)$-free class, requiring that the coflow-lattice of every minor
contains a vector with at most two non-zero entries which must be $+1$
or $-1$. This class is easily seen to be 3-colorable in the sense of
Hochst\"{a}ttler and Nickel, if the matroid is loopless.  In general,
the class of $M(K_4)$-free matroids is not well
understood~\cite{geelenblog}. If every such orientable matroid could
be proven to be $GSP$, then Hadwiger's conjecture would hold for 
oriented matroids in the case $k=3$. A large class of orientable
matroids without $M(K_4)$-minor is formed by the gammoids.
Unfortunately, they are only slightly better understood.

One possibility to prove membership in the class $GSP$ is to find two
compatible cocircuits in the oriented matroid with symmetric
difference of cardinality at most two.  The existence of such a pair
is guaranteed for any orientation if there exists a coline with more
simple than multiple copoints.  Call such a coline a positive coline. Here, a copoint  $H$ (a hyperplane in the
underlying matroid) is simple with respect to the coline $L$ if
$|H \setminus L|=1$. Goddyn et
al.\ showed in \cite{goddynetal} that every simple bicircular matroid of rank at least $2$
has a positive coline. Bicircular matroids are transversal matroids, and the smallest class under minors that contains transversal matroids is the class of gammoids. Therefore they conjectured in the same work that the same holds for every
simple gammoid of rank at least $2$. Albrecht and Hochst\"{a}ttler
\cite{Immanuelandwinfried, immanueldiss} later proved this to be true for rank $3$.
However, Guzman-Pro and Hochst\"{a}ttler recently proved in \cite{santiago} that not every simple gammoid of rank at least $2$ has a positive coline, disproving by that the conjecture of Goddyn et al. \cite{goddynetal}. They also proved in the same work that the existence of a good coline (a coline with at least two simple copoints) is enough to prove membership to the GSP class, and that such a coline exists in every simple cobicircular matroid of rank at least $2$.

Every loopless oriented matroid of rank $3$ not containing $M(K_4)$ is 3-colorable \cite{onthechromaticnumber}. In addition, until now bicircular matroids of rank $\geq 2$ \cite{goddynetal} and simple lattice path matroids of rank $\geq 2$\cite{Immanuelandwinfried1} were  shown to be $3$-colorable by proving the existence of a positive coline in each one of them. In this paper, we prove the following theorem by proving the existence of a good coline in any simple positroid of rank at least $2$.
\begin{thm}
	 Simple positroids of rank at least $2$ are $3$-colorable.
	\end{thm}

We  assume basic familiarity with graph theory, matroid
theory and oriented matroids. The standard references are~\cite{oxleybook,ombibel}. 

\section{Preliminaries}

In this work we consider matroids to be pairs $M=(E,\mathcal{B})$
where $E=[n]=\{1,2,...n\}$ and present them as their set of bases. 
For simplicity sake, we omit curly brackets and commas when writing subsets. So rather than $\mathcal{B}=\{\{1,2\},\{1,3\}\}$, we write $\mathcal{B}=\{12,13\}$.
A \emph{path} in a digraph $D=(V,A)$ is a non-empty and non-repeating sequence $p_{1}p_{2}\dots p_{n}$ of vertices $p_i \in V$ such that for each $1 \leq i <n$, $(p_{i},p_{i+1}) \in A$. By convention, we shall denote $p_n$ by $p_{-1}$. Furthermore, the set of vertices traversed by a path $p$ shall be denoted by $p=\{p_{1},p_{2}, \dots,p_{n}\}$.

As mentioned in the introduction, gammoids are the main motivation behind this work, however they are not very well understood. For a deeper understanding of gammoids, we refer the reader to \cite{immanueldiss}. In this section we define gammoids and positroids, and show that positroids are actually gammoids. For that we are going to need the following definition.

\begin{defn}
	Let $D=(V,A)$ be a digraph, and $X,Y \subseteq V$. A \emph{routing} from $X$ to $Y$ in $D$ is a family of paths $R$, such that: 
	\begin{itemize}
		\item for each $x$ in $X$ there is some $p \in R$ with $p_{1}=x$ 
		\item for all $p \in R$ the end vertex $p_{-1} \in Y$, and 
		\item for all $p,q \in R$, either $p=q$ or $p\cap q=\emptyset$
	\end{itemize}   
	A routing $R$ is called \emph{linking} from $X$ to $Y$, if it is a routing onto $Y$, i.e.\ whenever $Y =\{p_{-1} \:| \: p \in R \}$. We write $X \rightrightarrows Y$ whenever there is a routing from $X$ to $Y$. Furthermore, we say that $v$ is linked to $w$ in a directed graph, whenever there exists a directed path from $v$ to $w$ or from $w$ to $v$.
\end{defn}

\begin{defn}[Gammoids]
	Let $D=(V,A)$ be a digraph, $E\subseteq V$, and $T \subseteq V$. The \emph{gammoid} represented by $(D,T,E)$ is defined to be the matroid $M(D,T,E)=(E,\mathcal{I})$ where 
	\begin{equation*}
		\mathcal{I}=\{X \subseteq E \; | \; \text{there is a routing} \; X \rightrightarrows T \; \text{in D} \}.
	\end{equation*}
\end{defn}  

Positroids were originally defined in \cite{postnikov} as the column
sets coming from nonzero maximal minors of a real matrix such that all
maximal minors are nonnegative. Thus positroids are matroids
representable over the reals and therefore orientable. However, in this
paper we use an equivalent definition using
\reflectbox{L}-graphs.

\begin{defn}[\reflectbox{L}-diagram]
	A \reflectbox{L}-\emph{diagram} is a lattice path with a finite collection of boxes lying above it, arranged in left-justified rows, with the row lengths in non-increasing order, where some of the boxes are filled with dots in such a way that the \reflectbox{L}-\emph{property} is satisfied: If a box has a dot above it in the same column and to its left in the same row, then this box should contain a dot. Figure \ref{L-diagram} shows an example of a  \reflectbox{L}-diagram.
\end{defn} 

\begin{defn}[\reflectbox{L}-graph]
	A \reflectbox{L}-graph is constructed from a \reflectbox{L}-diagram as follows.
	\begin{itemize}
		\item Add a vertex in the middle of each edge of the lattice path and label them from $1$ to $n$ starting at the Northeast corner of the path and ending at the Southwest corner. We refer to these vertices as \emph{sources} and \emph{sinks}, depending on whether they lie on a horizontal or vertical step. Note that we do not think of this as a normal lattice path, where we start at the origin and then move north or east. So the first step of the lattice path is the edge where the vertex with the label $1$ is, and the last step is the edge where the vertex with the label $n$ is.
		
		The vertices of the \reflectbox{L}-graph are the  dots inside the boxes in the \reflectbox{L}-diagram and the vertices added in this step.
		
		\item Add edges directed to the right between any two consecutive vertices that lie in the 
		same row.
		\item Add edges directed upwards between any two consecutive  vertices that lie in the same column.
	\end{itemize} 
	A \reflectbox{L}-\emph{graph} $G=(V,A)$ is the graph obtained from a \reflectbox{L}-diagram following the above steps (see Figure \ref{le-graph}), where $V$ is the set vertices formed by the internal vertices (dots inside boxes) and the external vertices (vertices placed on the lattice path), and $A$ is the set of directed edges added to the \reflectbox{L}-diagram previously. Edges between external vertices are not in $A$. The \reflectbox{L}-property implies that $G$ is always planar.
\end{defn}

The following provides an example of a \reflectbox{L}-diagram and a \reflectbox{L}-graph.

\begin{center}
	\begin{figure}[h]
		\begin{minipage}{.45\textwidth}
			\centering
			\begin{tikzpicture}[scale=0.9]
				\draw[help lines,line width=.5pt,step=1] (0,0) grid (2,1);
				\draw[help lines,line width=.5pt,step=1] (0,1) grid (3,3);
				\draw[line width=0.5mm] (4,3)--(3,3)--(3,1)--(2,1)--(2,0)--(0,0);
				\foreach \Point in {(1.5,2.5),(2.5,1.5),(1.5,1.5),(0.5,1.5),(1.5,0.5)}{
					\node at \Point {\textbullet};}
			\end{tikzpicture}
			\caption{A \reflectbox{L}-diagram $D$}
			\label{L-diagram}
		\end{minipage}\hspace{2cm}
		\begin{minipage}{.45\textwidth} 
			\centering
			\begin{tikzpicture}[scale=0.9]
				\draw[help lines,line width=.5pt,step=1] (0,0) grid (2,1);
				\draw[help lines,line width=.5pt,step=1] (0,1) grid (3,3);
				\draw[line width=0.5mm] (4,3)--(3,3)--(3,1)--(2,1)--(2,0)--(0,0);
				\draw[thick,->](1.5,2.5) -- (3,2.5);
				\draw[thick,->](2.5,1.5) -- (3,1.5);
				\draw[thick,->](1.5,1.5) -- (2.5,1.5);
				\draw[thick,->](0.5,1.5) -- (1.5,1.5);
				\draw[thick,->](1.5,0.5) -- (2,0.5);
				\draw[thick,->](1.5,1.5) -- (1.5,2.5);
				\draw[thick,->](1.5,0.5) -- (1.5,1.5);
				\draw[thick,->](1.5,0) -- (1.5,0.5);
				\draw[thick,->](0.5,0) -- (0.5,1.5);
				\draw[thick,->](2.5,1) -- (2.5,1.5);
				
				\draw(3.5,3) node[below] {1};
				\draw(3,2.5) node[right] {2};
				\draw(3,1.5) node[right] {3};
				\draw(2.5,1) node[below] {4};
				\draw(2,0.5) node[right] {5};
				\draw(1.5,0) node[below] {6};
				\draw(0.5,0) node[below] {7};
				
				\foreach \Point in {(3.5,3),(3,2.5),(3,1.5),(2.5,1),(2,0.5),(1.5,0),(0.5,0),(1.5,2.5),(2.5,1.5),(1.5,1.5),(0.5,1.5),(1.5,0.5)}{
					\node at \Point {\textbullet};}
			\end{tikzpicture}
			\caption{The corresponding \reflectbox{L}-graph obtained from $D$}
			\label{le-graph}
		\end{minipage}
	\end{figure}
\end{center}

According to \cite{postnikov}, the following definition of positroids is equivalent to all of the definitions of positroids found there. We recommend to the reader the work of Ardila et al. \cite{ardila} for a better understanding of positroids. 
\begin{defn}[Positroids]
	Let $\Gamma$ be a \reflectbox{L}-graph and let $B$ be the set of sinks, that is the set of external vertices labeling vertical edges of the lattice path, let $|B|=r$. We define the collection $\mathcal{I}$ of independent sets of the positroid $\mathcal{P}(\Gamma)$ to be all sets $I \in $ $[n] \choose k$ where $k \leq r$, such that there exists $k$ pairwise vertex-disjoint paths from $I$ to $B$ in $\Gamma$.  Note that if $x \in I \cap B$ then $x$ is the trivial path from $I$ to $B$ and therefore every element of $B$ is an independent set on its own.
	A matroid $M=([n],\mathcal{I})$ is a positroid if there is a \reflectbox{L}-graph $\Gamma$ such that $M \cong \mathcal{P}(\Gamma)$. The rank of the positroid $r(\mathcal{P})$ is $r$ where $r$ is the maximum size of an independent set in the positroid, which is equal to the number of sinks.
\end{defn}

As it is commonly done, we identify a matroid with its collection of basis. These are the independent sets of size $r$ where $r$ is the rank of the positroid. For example, the positroid built from the previous \reflectbox{L}-graph in Figure \ref{le-graph} is
\begin{equation*}
	\mathcal{P} = \{235,236,245,246,256,257,267,356,357,367,456,457,467\}.
\end{equation*}

It is easy to see that positroids are actually gammoids. For any independent set $I$ in a positroid $\mathcal{P}$, there is a routing between $I$ and $B$, where $B$ is the set of all sinks in $\mathcal{P}$. Therefore a positroid is a gammoid with representation $(\Gamma,B,[n])$ with $M(\Gamma,B,[n])=(E,\mathcal{I})$ where 
\begin{equation*}
	\mathcal{I}=\{X \subseteq E \; | \; \text{there is a routing} \; X \rightrightarrows B \; \text{in}\; D \}
\end{equation*}

\begin{rem}[Rank of a subset of a positroid]
	In order to understand the proof of our main theorem, it is crucial to understand how we can compute the rank of an arbitrary subset of a positroid $\mathcal{P}(\Gamma)$ using the \reflectbox{L}-graph $\Gamma$. We refer the reader to our latest work \cite{lamaralgorithm} where we explain in details the algorithm that computes the rank of an arbitrary subset of a positroid. As you already know, the rank $r(S)$ of a set $S$ is the maximum size of an independent subset in $S$. Therefore, $r(S)=k$ such that $k$ is the maximum number of pairwise vertex-disjoint paths from $S$ to $B$, where $B$ is the set of all sinks of $\Gamma$. We give here a quick way to calculate the rank. 
	
	Let $S$ be an arbitrary subset of a positroid $\mathcal{P}$ that contains $x$ sinks. We have that $r(S)= x+t$, such that $t$ is the maximum number of vertex-disjoint walks from $S\backslash B$ (the sources in $S$) to $B\backslash S$ (the sinks not in $S$).
\end{rem}

In this work we show that simple positroids are 3-colorable, by proving the existence of a good coline. For that, we recall the following definitions. 

\begin{defn}[Closure operator]
	Let $M$ be a matroid on the set $E$, and let $2^E$ be the collection of subsets of $E$. The \emph{closure operator} of $M$ is the function $cl$: $2^E \to 2^E$ given as follows: for $X \subseteq E$,
	\begin{equation*}
		cl(X) = \{e: r(X \cup e) = r(X)\}
	\end{equation*}
	The \emph{flats} or \emph{closed sets} of $M$ are the set image of the operator $cl$.
\end{defn}

\begin{defn}[Copoints and Colines]
	Let $M$ be a matroid. A \emph{copoint} of $M$ is a flat of codimension $1$, that is a hyperplane. A \emph{coline} is a flat of codimension 2. If $L$ is a coline of $M$, $H$ a copoint of $M$ and $L \subseteq H$, then we 
	say that $H$ is a \emph{copoint on} $L$. The copoint is \emph{simple} (with respect to $L$) if $|H \backslash L| = 1$, otherwise it is \emph{multiple}. A coline is \emph{good} if there are at least $2$ simple copoints on $L$. 
\end{defn}

\section{Loops, coloops and parallel elements in a \reflectbox{L}-graphs}

In the \reflectbox{L}-graph, a loop of the positroid is represented by a source that is not linked to anything. A coloop is represented by a sink that is not linked to anything. Figure \ref{loopcoloop} below presents the \reflectbox{L}-graph of a positroid containing loops and a coloop.

Since the set of all sinks in the \reflectbox{L}-graph is a base of the positroid, a pair of elements can be parallel only if both elements are sources or if one is a source and the other is a sink. Let $h_i$ and $h_j$ be two sources in the \reflectbox{L}-graph of a positroid $\mathcal{P}$ such that $h_i$ is to the right of $h_j$, and let $w$ be the first internal vertex above $h_i$.

\begin{center}
	\begin{figure}[h]
		\begin{minipage}{.45\textwidth}
			\centering
			\begin{tikzpicture}[scale=0.9]
				\draw[help lines,line width=.5pt,step=1] (0,0) grid (2,1);
				\draw[help lines,line width=.5pt,step=1] (0,1) grid (3,3);
				\draw[line width=0.5mm] (4,3)--(3,3)--(3,1)--(2,1)--(2,0)--(0,0);
				\draw[thick,->](1.5,2.5) -- (3,2.5);
				\draw[thick,->](1.5,1.5) -- (3,1.5);
				\draw[thick,->](0.5,1.5) -- (1.5,1.5);
				\draw[thick,->](1.5,1.5) -- (1.5,2.5);
				\draw[thick,->](1.5,0) -- (1.5,1.5);
				\draw[thick,->](0.5,0) -- (0.5,1.5);
				\draw[thick,->](0.5,1.5) -- (0.5,2.5);
				\draw[thick,->](0.5,2.5) -- (1.5,2.5);
				
				\draw(3.5,3) node[below] {1};
				\draw(3,2.5) node[right] {2};
				\draw(3,1.5) node[right] {3};
				\draw(2.5,1) node[below] {4};
				\draw(2,0.5) node[right] {5};
				\draw(1.5,0) node[below] {6};
				\draw(0.5,0) node[below] {7};
				
				\foreach \Point in {(3.5,3),(3,2.5),(3,1.5),(2.5,1),(2,0.5),(1.5,0),(0.5,0),(1.5,2.5),(1.5,1.5),(0.5,1.5),(0.5,2.5)}{
					\node at \Point {\textbullet};}
			\end{tikzpicture}
			\caption{The \reflectbox{L}-graph of a positroid in which the sources 1 and 4 are loops, and the sink 5 is a coloop.}
			\label{loopcoloop}
		\end{minipage}\hspace{2cm}
		\begin{minipage}{.45\textwidth} 
			\centering
			\begin{tikzpicture}[scale=0.9]
				\draw[help lines,line width=.5pt,step=1] (0,0) grid (2,1);
				\draw[help lines,line width=.5pt,step=1] (0,1) grid (3,3);
				\draw[line width=0.5mm] (4,3)--(3,3)--(3,1)--(2,1)--(2,0)--(0,0);
				\draw[thick,->](1.5,2.5) -- (3,2.5);
				\draw[thick,->](2.5,1.5) -- (3,1.5);
				\draw[thick,->](0.5,0.5) -- (1.5,0.5);
				\draw[thick,->](1.5,0.5) -- (2,0.5);
				\draw[thick,->](1.5,0.5) -- (1.5,2.5);
				\draw[thick,->](1.5,0) -- (1.5,0.5);
				\draw[thick,->](0.5,0) -- (0.5,0.5);
				\draw[thick,->](2.5,1) -- (2.5,1.5);
				
				\draw(3.5,3) node[below] {1};
				\draw(3,2.5) node[right] {2};
				\draw(3,1.5) node[right] {3};
				\draw(2.5,1) node[below] {4};
				\draw(2,0.5) node[right] {5};
				\draw(1.5,0) node[below] {6};
				\draw(0.5,0) node[below] {7};
				
				\foreach \Point in {(3.5,3),(3,2.5),(3,1.5),(2.5,1),(2,0.5),(1.5,0),(0.5,0),(1.5,2.5),(2.5,1.5),(0.5,0.5),(1.5,0.5)}{
					\node at \Point {\textbullet};}
			\end{tikzpicture}
			\caption{The \reflectbox{L}-graph of a positroid in which \{3,4\} and \{6,7\} are two parallel pairs.}
			\label{parallel}
		\end{minipage}
	\end{figure}
\end{center}

\textbullet \; $h_i$ and $h_j$ are parallel if and only if any directed path starting from $h_j$ must go through $w$.\\

\textbullet \; A source $h$ and a sink $v$ are parallel if and only if $h$ is not linked to any sink other than $v$ and $h$ is linked to $v$ . \\

Figure \ref{parallel} above presents a \reflectbox{L}-graph of a positroid with parallel elements. In the rest of this work, we consider only simple positroids, i.e.\ positroids with no loops and parallel elements. Therefore we assume that each source in the \reflectbox{L}-graph is linked to at least two sinks and no two sources are parallel.

\section{Connectivity of positroids}
For this section we recall the following proposition from \cite{oxleybook} about the matroid's connectivity.

\begin{prop}[Proposition 4.1.3 in \cite{oxleybook}]\label{connectedmatroid}
	A matroid $M$ is \emph{connected} if and only if, for every pair of
	distinct elements of $E(M)$, there is a circuit containing both.
\end{prop}

In order to characterize connectivity of positroids in \reflectbox{L}-graphs, we need the following definitions.

\begin{defn}
	Let $\mathcal{P}$ be a positroid and $D$ its \reflectbox{L}-diagram. A \emph{level} in $D$ is a collection of consecutive horizontal edges of the lattice path with the consecutive vertical edges that follows them.
\end{defn}
For example, in Figure \ref{parallel}, the edges of the lattice path labelled by $2,3$ and $4$ form a level and the edges of the lattice path labelled by $5,6$ and $7$ form another level.

\begin{defn}
	Let $\mathcal{P}$ be a positroid and $D$ be its \reflectbox{L}-diagram. An \emph{isolated block} in $D$ is a level or a collection of levels in which the sources of this block are only connected to the sinks in it.
	We can write this formally in the following way.

	\begin{equation*}
		B \text{ is an isolated block in }D \iff \begin{cases}
			\forall v \text{ a sink} \in B, \forall h \text{ a source} \notin B, h\text{ is not linked to }v,  \text{ and}\\
			\forall h \text{ a source} \in B, \forall v \text{ a sink} \notin B, h \text{ is not linked to }v.
		\end{cases}
	\end{equation*}
\end{defn}

The following figures are examples of \reflectbox{L}-diagrams with isolated blocks.

\begin{center}
	\begin{minipage}{.45\textwidth}
		\centering
		\begin{tikzpicture}[scale=0.8]
			\draw[help lines,line width=.4pt,step=1] (-1,-2) grid (0,3);
			\draw[help lines,line width=.4pt,step=1] (0,-1) grid (2,3);
			\draw[help lines,line width=.4pt,step=1] (2,2) grid (3,3);
			
			\draw[line width=0.5mm] ((3,3)--(3,1)--(2,1)--(2,-1)--(0,-1)--(0,-2)--(-1,-2);
			\draw[thick,->](2.5,2.5) -- (3,2.5);
			\draw[thick,->](2.5,1.5) -- (3,1.5);
			\draw[thick,->](1.5,0.5) -- (2,0.5);
			\draw[thick,->](1.5,-0.5) -- (1.5,0.5);
			\draw[thick,->](0.5,0.5) -- (1.5,0.5);
			\draw[thick,->](2.5,1) -- (2.5,1.5);
			\draw[thick,->](2.5,1.5) -- (2.5,2.5);
			\draw[thick,->](0.5,-0.5) -- (0.5,0.5);
			\draw[thick,->](0.5,-1) -- (0.5,-0.5);
			\draw[thick,->](0.5,-0.5) -- (1.5,-0.5);
			\draw[thick,->](1.5,-1) -- (1.5,-0.5);
			\draw[thick,->](1.5,-0.5) -- (2,-0.5);
			\draw[thick,->](-0.5,-2) -- (-0.5,-1.5);
			\draw[thick,->](-0.5,-1.5) -- (0,-1.5);
			\draw[thick,->](-0.5,-1.5) -- (-0.5,1.5);
			\draw[thick,->](-0.5,1.5) -- (2.5,1.5);
			
			\draw(3,2.5) node[right] {1};
			\draw(3,1.5) node[right] {2};
			\draw(2.5,1) node[below] {3};
			\draw(2,0.5) node[anchor=north west] {4};
			\draw(2,-0.5) node[right] {5};
			\draw(0,-1.5) node[anchor=north west] {8};
			\draw(1.5,-1) node[below] {6};
			\draw(0.5,-1) node[below] {7};
			\draw(-0.5,-2) node[below] {9};

			\foreach \Point in {(2,-0.5),(0.5,-0.5),(1.5,-0.5),(-0.5,-1.5),(-0.5,1.5),(-0.5,-2),(0.5,-1),(1.5,-1),(2,0.5),(2.5,2.5),(3,2.5),(3,1.5),(2.5,1),(2.5,1.5),(1.5,0.5),(0.5,0.5),(0,-1.5)}{
				\node at \Point {\textbullet};}
		\end{tikzpicture} 
		\captionof{figure}{A \reflectbox{L}-diagram with an isolated block formed by one level: edges labelled by 4,5,6 and 7. The other isolated block consists of two levels: edges labelled by 1,2,3,8 and 9}
	\end{minipage}	\hspace{1cm}
	\begin{minipage}{.45\textwidth}
		\centering
		\begin{tikzpicture}[scale=0.8]
			\draw[help lines,line width=.4pt,step=1] (-1,-2) grid (0,3);
			\draw[help lines,line width=.4pt,step=1] (0,-1) grid (1,3);
			\draw[help lines,line width=.4pt,step=1] (1,0) grid (2,3);
			\draw[help lines,line width=.4pt,step=1] (2,2) grid (3,3);
			
			\draw[line width=0.5mm] ((3,3)--(3,2)--(2,2)--(2,0)--(1,0)--(1,-1)--(0,-1)--(0,-2)--(-1,-2);
			\draw[thick,->](2.5,2.5) -- (3,2.5);
			\draw[thick,->](2.5,2) -- (2.5,2.5);
			\draw[thick,->](1.5,0.5) -- (2,0.5);
			\draw[thick,->](1.5,0) -- (1.5,0.5);
			\draw[thick,->](0.5,0.5) -- (1.5,0.5);
			\draw[thick,->](1.5,0.5) -- (1.5,1.5);
			\draw[thick,->](1.5,1.5) -- (2,1.5);
			\draw[thick,->](0.5,-0.5) -- (0.5,0.5);
			\draw[thick,->](0.5,-1) -- (0.5,-0.5);
			\draw[thick,->](0.5,-0.5) -- (1,-0.5);
			\draw[thick,->](-0.5,-2) -- (-0.5,-1.5);
			\draw[thick,->](-0.5,-1.5) -- (0,-1.5);
			\draw[thick,->](-0.5,-1.5) -- (-0.5,2.5);
			\draw[thick,->](-0.5,2.5) -- (2.5,2.5);
			
			\draw(3,2.5) node[right] {1};
			\draw(2.5,2) node[below] {2};
			\draw(2,1.5) node[anchor=north west] {3};
			\draw(2,0.5) node[right] {4};
			\draw(1.5,0) node[below] {5};
			\draw(1,-0.5) node[anchor=north west] {6};
			\draw(0.5,-1) node[below] {7};
			\draw(0,-1.5) node[anchor=north west] {8};
			\draw(-0.5,-2) node[below] {9};

			\foreach \Point in {(2,1.5),(1.5,0),(1.5,1.5),(0.5,-0.5),(-0.5,-1.5),(-0.5,2.5),(-0.5,-2),(0.5,-1),(1,-0.5),(2,0.5),(2.5,2.5),(3,2.5),(2.5,2),(1.5,0.5),(0.5,0.5),(0,-1.5)}{
				\node at \Point {\textbullet};}
		\end{tikzpicture} 
		\captionof{figure}{A \reflectbox{L}-diagram with two isolated blocks. Block 1: edges labelled by 3,4,5,6,7. Block 2: edges labelled by 1,2,8,9}
	\end{minipage}
\end{center}


We next prove that isolated blocks are strongly related to the connectivity of a positroid.
\begin{lemma}
	A positroid $\mathcal{P}$ is connected if and only if there exists no two disjoint isolated blocks in its \reflectbox{L}-diagram. Equivalently, a positroid $\mathcal{P}$ is not connected if and only if there exist two disjoint isolated blocks in its \reflectbox{L}-diagram $D$. 
\end{lemma}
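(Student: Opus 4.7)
The plan is to prove both directions of the equivalence separately. The direction ``isolated block implies disconnected'' is routine, while ``disconnected implies isolated block'' requires a case split according to whether $\mathcal{P}$ has a loop or coloop.

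For the easy direction, suppose $D$ contains an isolated block $B$. By the defining property of an isolated block, every directed path in the \reflectbox{L}-graph from a source to a sink lies entirely in $B$ or entirely in $E \setminus B$. Consequently, for any $X \subseteq E$, a maximum family of vertex-disjoint routings from $X$ to the base $T$ of all sinks splits into independent routings on $B$ and on $E \setminus B$. Using the rank formula stated in the preliminary remark, this yields $r(X) = r(X \cap B) + r(X \setminus B)$ for every $X$, hence $\mathcal{P} = \mathcal{P}|_B \oplus \mathcal{P}|_{E \setminus B}$ and $\mathcal{P}$ is disconnected.

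For the converse, suppose $\mathcal{P}$ is disconnected. If $\mathcal{P}$ has a loop or a coloop, the preceding remark already provides an isolated block. Otherwise, every source is linked to at least one sink and every sink is linked from at least one source. The key observation is that whenever a source $h$ is linked to a sink $v$, then $v \in C(h, T)$, the fundamental circuit of $h$ with respect to $T$: the linking path from $h$ to $v$ together with the trivial self-routings of the sinks in $T \setminus \{v\}$ witnesses $(T \setminus \{v\}) \cup \{h\}$ as a basis, so swapping $v$ for $h$ remains a basis. Hence $h$ and $v$ lie in the same connected component of $\mathcal{P}$, and in the loopless, coloopless case the connected components of $\mathcal{P}$ coincide with the connected components of the bipartite linking graph on sources and sinks.

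Since $\mathcal{P}$ is disconnected, this linking graph has at least two components. The remaining and main step is to argue that at least one of these components forms a union of complete levels in $D$, which by definition then gives an isolated block. I expect this alignment step to be the main obstacle: it amounts to ruling out a link component that ``splits'' a level, and it would proceed by using the \reflectbox{L}-property to propagate the existence of a cross-component link whenever such a split occurs — that is, to show that if a single level contains a source in one link component and a sink in another, then the geometry of the dots forces a directed path from the former to the latter, contradicting the link-component structure. This is the combinatorial heart of the proof and where the bulk of the technical work will lie.
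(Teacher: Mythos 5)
Your two halves track the paper's own proof closely in substance. For the easy direction the paper simply observes that the isolated blocks partition the ground set and give a direct--sum decomposition; your rank--additivity argument via disjoint routings is a slightly more detailed version of the same thing. For the converse, the paper also passes through circuits: it notes that a matroid is connected iff every pair of elements lies in a common circuit, that a circuit lives inside a single component, and that a source $f$ linked to a sink $e$ would put $e$ and $f$ in a common circuit --- so distinct components cannot contain a linked source--sink pair. Your fundamental-circuit argument ($(T\setminus\{v\})\cup\{h\}$ is a basis, witnessed by the linking path plus trivial self-routings) is a cleaner and more careful version of exactly that step, and it is correct: paths in the \reflectbox{L}-graph cannot pass through external vertices internally, so the disjointness you need is automatic.

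The genuine issue is that you stop before the step you yourself call the ``combinatorial heart'': showing that a linking component is a union of complete levels, so that it literally satisfies the definition of an isolated block as ``a level or a collection of levels.'' As written, your proposal announces this step and sketches how it ``would proceed'' via the \reflectbox{L}-property, but does not carry it out, so the converse direction is not actually finished. You should be aware, though, that the paper's proof does not perform this alignment step either: it concludes ``thus $P_i$ is an isolated block'' directly from the linking condition, effectively treating that condition as the operative content of the definition (and indeed the paper's own Remark, calling a single coloop an isolated block, is only consistent with that reading). So either you adopt the same reading --- in which case your proof is already complete once the circuit step is done and the announced ``main obstacle'' evaporates --- or you insist on the literal ``union of levels'' clause, in which case both your proposal and the paper owe the reader an argument. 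Either way, as submitted the proposal is incomplete at a point you explicitly flagged rather than resolved.
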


\begin{proof}
	In the following we prove that a positroid is not connected if and only if there are disjoint isolated blocks. We start by the necessary condition. Let $B_1, \dots, B_k$ be the isolated blocks in $D$. These isolated blocks have disjoint set of elements and therefore form a partition of the elements of $P$ which can be written as the direct sum of these isolated blocks, $\mathcal{P}=B_{1} \oplus \dots \oplus B_{k}$. Thus, $P$ is not connected since clearly no circuit can contain two elements from two disjoint isolated blocks.
	
	For the sufficient condition, we now assume that $P$ is not connected. As being on a common circuit is an equivalence relation, we find a partition $P=P_{1} \oplus \dots \oplus P_k$ with $k \ge 2$. We now show that the $P_i$'s are the isolated blocks of $D$. Let $B(P_i)$ (resp. $H(P_i)$) be the set of sinks (resp. sources) of the isolated block $P_i$. Therefore $\forall$ $e \in P_i$ and $\forall$ $f \in P_j$ with $i \neq j$,  $e$ and $f$ do not belong to a common circuit. We now consider two cases. If $e \in B(P_i)$, then $\forall f \in H(P_j)$ ($i \neq j$), $f$ is not linked to $e$ because otherwise $e,f$ and all sinks linked to $f$ form a circuit, which contradicts the fact that $e$ and $f$ do not belong to a common circuit. Similarly, if $e \in H(P_i)$, then $\forall f \in B(P_j)$ ($i \neq j$), $e$ is not linked to $f$ because otherwise $e,f$ and all sinks linked to $e$ form a circuit. Thus $P_i$ is an isolated block in $D$, $\forall$  $1 \leq i \leq k$. 
	\end{proof}

\begin{rem}
	In \cite{Bonin2010}, J. Bonin proved that a lattice path matroid $L$ of rank $r(L)$ is connected if and only if it has a spanning circuit, that is a circuit of rank  $r(L)$. Unlike lattice path matroids, a positroid can be connected without having a spanning circuit. In the following, we present a  counterexample (Figure \ref{counterex}) showing a connected positroid and then prove the absence of a spanning circuit in it.
	
	\begin{center}
		\begin{figure}[h]
			\centering
			\begin{tikzpicture}[scale=1]
				
				\draw[help lines,line width=.5pt,step=1] (0,0) grid (2,1);
				\draw[help lines,line width=.5pt,step=1] (0,0) grid (1,-1);
				\draw[help lines,line width=.5pt,step=1] (0,1) grid (3,3);
				
				\draw[line width=0.5mm] (3,3)--(3,1)--(2,1)--(2,0)--(1,0)--(1,-1)--(0,-1);
				\draw[thick,->](2.5,2.5) -- (3,2.5);
				\draw[thick,->](1.5,2.5) -- (2.5,2.5);
				\draw[thick,->](2.5,1.5) -- (3,1.5);
				\draw[thick,->](0.5,2.5) -- (1.5,2.5);
				\draw[thick,->](1.5,0.5) -- (2,0.5);
				\draw[thick,->](0.5,-0.5) -- (1,-0.5);
				\draw[thick,->](1.5,0) -- (1.5,0.5);
				\draw[thick,->](0.5,-1) -- (0.5,2.5);
				\draw[thick,->](2.5,1) -- (2.5,1.5);
				\draw[thick,->](1.5,0.5) -- (1.5,2.5);
				\draw[thick,->](2.5,1.5) -- (2.5,2.5);
				
				\draw(3,2.5) node[right] {1};
				\draw(3,1.5) node[right] {2};
				\draw(2.5,1) node[below] {3};
				\draw(2,0.5) node[right] {4};
				\draw(1.5,0) node[below] {5};
				\draw(1,-0.5) node[right] {6};
				\draw(0.5,-1) node[below] {7};
				
				\foreach \Point in {(0.5,-1),(1,-0.5),(3,2.5),(3,1.5),(2.5,1),(2,0.5),(1.5,0),(2.5,2.5),(2.5,1.5),(0.5,2.5),(1.5,0.5),(0.5,-0.5),(1.5,2.5)}{
					\node at \Point {\textbullet};}
			\end{tikzpicture}
			\caption{A connected positroid with no spanning circuit.}
			\label{counterex}
		\end{figure}
	\end{center}
	
	The positroid in Figure \ref{counterex} is connected since there are no two disjoint isolated blocks in the \reflectbox{L}-diagram. Suppose this positroid has a spanning circuit $C$. It is easy to notice that $C$ must contain at least one of $6$ and $7$, since otherwise $C$ doesn't have full rank and therefore is not spanning. Furthermore, $C$ must contain $6$ and $7$ simultaneously, because if only one of them was in $C$, then its removal will prevent us from reaching full rank and therefore the rank of $C$ will decrease after its removal. Similarly, both $4$ and $5$ must be in $C$. Until now, $C$ contains the set $\{4,5,6,7\}$. However, this set is not independent, it is of rank $3$ and has $4$ elements. Therefore this set cannot be properly contained in any circuit, which contradicts our assumption that $C$ is a spanning circuit. 
\end{rem}

\section{Main theorem and proof}
Before we get into our main theorem and its proof, let us mention the following propositions, which explain why we assume the connectivity of positroids in the proof.

\begin{prop}[\cite{ardila} Proposition 3.5]\label{ardila}
	Positroids are closed under taking minors and duals.
\end{prop}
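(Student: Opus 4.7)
The plan is to use Postnikov's original definition of positroids as matroids representable by a real $r \times n$ matrix $A$ with all maximal minors nonnegative, with respect to the standard ordering of the columns $1,2,\dots,n$. It suffices to handle single-element deletion, single-element contraction, and duality separately: arbitrary minors reduce to iterations of the first two operations, and the general identity $M/e = (M^{*} \setminus e)^{*}$ reduces contraction to the composition of duality and deletion. So only deletion and duality need direct attention.

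Deletion is immediate from the matrix definition. Let $A'$ be the $r \times (n-1)$ matrix obtained by removing the $i$-th column of $A$. Every $r \times r$ minor of $A'$ is itself a maximal minor of $A$ and hence nonnegative, while the remaining columns inherit their linear order. Thus $A'$ realizes $\mathcal{P} \setminus i$ as a positroid on the ground set $[n] \setminus \{i\}$.

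The duality case is the real obstacle. The task is to produce, from a totally nonnegative $r \times n$ representation $A$ of $\mathcal{P}$, an $(n-r) \times n$ matrix $A^{\perp}$ whose row space is the orthogonal complement of the row space of $A$ and whose maximal minors are all nonnegative in the column ordering $1,2,\dots,n$. The classical Pl\"ucker computation shows that, for the naive orthogonal complement, the maximal minor on the complementary index set $[n] \setminus I$ equals the $r \times r$ minor of $A$ on $I$, up to a sign that depends on $I$ in an alternating fashion in the column indices. The fix is to multiply the $i$-th column of $A^{\perp}$ by $(-1)^{i}$; this twist exactly cancels the alternating signs and yields a genuinely totally nonnegative matrix. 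Since the bases of the matroid represented by this twisted $A^{\perp}$ are precisely the complements of the bases of $\mathcal{P}$, the dual $\mathcal{P}^{*}$ is realized as a positroid. Verifying that the signs really are of the alternating form claimed, and that multiplying columns by signs does not disturb the underlying matroid, is where the one genuine calculation lies.

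Contraction then follows for free: $\mathcal{P}/i = (\mathcal{P}^{*} \setminus i)^{*}$, and each of the three steps (dualize, delete, dualize) has just been shown to preserve the positroid property. As an alternative route, one could instead work entirely within the \reflectbox{L}-diagram picture used throughout this paper, exhibiting explicit combinatorial operations on diagrams (truncating a row, removing a level, or the reflective symmetry corresponding to duality) that realize deletion, contraction, and duality; but the matrix-based argument above seems cleaner and more self-contained.
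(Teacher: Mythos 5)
The paper offers no proof of this proposition at all: it is quoted from Ardila, Rinc\'on and Williams \cite{ardila}, so there is no internal argument to compare against. Your outline is in substance the standard proof given in that reference: deletion by discarding a column, duality by passing to a matrix for the orthogonal complement and twisting its $i$-th column by $(-1)^i$ so that the Pl\"ucker sign $(-1)^{\sum_{i\in I}i}$ relating $\Delta_{[n]\setminus I}(A^{\perp})$ to $\Delta_I(A)$ collapses to a single global sign (which, if negative, you remove by negating one row --- a half-line you omit but need), and contraction via $\mathcal{P}/i=(\mathcal{P}^{*}\setminus i)^{*}$. Scaling columns by nonzero reals preserves the column matroid, so the twist is indeed harmless.

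The one genuine gap is in the deletion step, which is not quite ``immediate.'' If the deleted element $i$ is a coloop of $\mathcal{P}$, removing column $i$ drops the rank of the matrix from $r$ to $r-1$: the resulting $r\times(n-1)$ matrix has all maximal minors equal to zero and does not represent $\mathcal{P}\setminus i$ in the required full-row-rank form, so the observation that ``every $r\times r$ minor of $A'$ is a maximal minor of $A$'' establishes nothing in that case. You must either row-reduce to an $(r-1)\times(n-1)$ matrix and redo the sign bookkeeping (expanding along the coloop column introduces a position-dependent sign $(-1)^{\#\{j\in J:\,j<i\}}$ that needs a further column twist to absorb), or, more cleanly, note that for a coloop $\mathcal{P}\setminus i=\mathcal{P}/i$ and that $i$ is then a non-loop, so this case is subsumed by your contraction route, whose intermediate deletion $\mathcal{P}^{*}\setminus i$ removes a non-coloop of $\mathcal{P}^{*}$ and hence keeps full rank. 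Symmetrically, contraction of a loop must be rerouted through deletion of that (zero, hence harmless) column, since the naive route $(\mathcal{P}^{*}\setminus i)^{*}$ would delete a coloop of $\mathcal{P}^{*}$. With this case split --- delete a non-coloop directly, contract a non-loop via duality, and identify the two degenerate operations with each other --- your argument closes up and coincides with the proof in \cite{ardila}.
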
 

\begin{prop} \label{prop}
	Let $M$ be a non-connected matroid such that each component has rank $\ge 2$. If one connected component of $M$ has a good coline then $M$ has a good coline.
\end{prop}

\begin{proof}
    Since $M$ is not connected, we may assume that $M$ can be written as the direct sum $M=M_{1}\oplus M_{2}$ of two matroids $M_1$ of rank $r_1$ and $M_2$
	of rank $r_2$ where $M_1$ is connected and $r_1 , r_2 \ge 2$. If $M_1$ has
	a good coline $L$, it is easy to see that $L \cup M_2$ is a good
	coline of $M$, because $L \cup M_2$ is a flat of rank $r-2$ and it is
	contained in the same number of simple and multiple copoints of
	$M_1$. In fact, if $C_1$ (resp.\ $C_2$) is a simple (resp.\ multiple)
	copoint on $L$, then $C_{1} \cup M_2$ (resp.\ $C_{2} \cup M_2$) is simple (resp.\
	multiple) copoint on $L \cup M_2$. Thus $M$ has a good coline.
\end{proof}


Now, we present and prove our main theorem. 

\begin{thm} \label{goodcoline}
	Every simple positroid of rank $r \geq 2$ has a good coline.
\end{thm}

\begin{proof}
	In a simple positroid of rank $1$, a coline is not defined and a simple positroid of rank $2$ has the empty set as a coline and thus every copoint is simple. For the rest of the proof we work with simple positroids of rank at least $3$. 
	Due to proposition \ref{prop}, it is enough to prove the theorem for connected positroids. Therefore, in the rest of the proof we assume connectivity of the positroids, i.e.\
	there exists no two disjoint isolated blocks (in particular, no coloops or loops). Moreover, since we only consider simple
	positroids, i.e.\ positroids with no
	loops and parallel elements, each source should be linked to at least two sinks, otherwise the source and the sink linked to it form a parallel pair. Thus, the first two steps of the lattice path in the \reflectbox{L}-diagram of the positroid have to be vertical. Recall that we consider the lattice path as starting from the Northeast and ending in the Southwest.
	 Let $V=\{v_{1},...,v_{r}\}$ be the set of sinks and $H=\{h_{1},...,h_{n-r}\}$ be the set of sources of a positroid $\mathcal{P}$ with $v_1 \le \ldots \le v_r$ and $h_1 \le \ldots \le h_{n-r}$. If a sink $v_j$ was followed by a source for some $j \in [r]$, we denote that source by $h^j$ .\\
	 
\textbf{Case 1:} There exists $j \in [r]$ such that $h^j$ is the only source linked to the sink $v_j$.

 Since $v_j$ and $h^j$ are not parallel, $h^j$ must be linked to some $v_i$ with $i<j$. Let $v_{a}^j$ be the highest sink $h^j$ is linked to. In other words, for any $k < a$, $h^j$ is not linked to $v_k$. We prove in this case that $L= cl(V \setminus \{v_{a}^j , v_{j}\})$ is a good coline. Note here that the set $L$ contains all sources that are not linked to neither $v_{a}^j$ nor $v_j$.
 It is clear that the set $H_1 =L \cup v_j$ is of rank $r-1$. To prove that $H_1$ is also closed we have to prove that adding any element to it will increase its rank. Clearly, $r(H_1 \cup v_{a}^j)=r$, and since $h^j$ is the source linked to both $v_{a}^j$ and $v_j$, we also have that $r(H_1 \cup h^j )=r$. All other sources that are not in $L$ are sources linked to $v_{a}^j$ and not to $v_j$ and therefore adding any of these sources to $H_1$ will increase its rank. Thus $H_1$ is closed and a simple copoint on $L$.
 Let $H_2 = L \cup h^j$. We have $r(H_2) = r-1$. Since $h^j$ is linked to both $v_{a}^j$ and $v_j$, it is clear that $r(H_2 \cup v_{a}^j)=r(H_2 \cup v_j)=r$. All other sources that are not in $L$ are sources linked to $v_{a}^j$ and not to $v_j$ and therefore adding any of these sources to $H_2$ will increase its rank. Thus $H_2$ is a simple copoint on $L$. This proves that $L$ is a good coline.
 
  \begin{ex}
 	Let us consider the following positroid as an example for case 1.
 \begin{center}
 		\begin{tikzpicture}[scale=0.8]
 			\draw[help lines,line width=.4pt,step=1] (-2,-1) grid (0,-3);
 			\draw[help lines,line width=.4pt,step=1] (-2,1) grid (2,-1);
 			\draw[help lines,line width=.4pt,step=1] (-2,1) grid (3,3);
 			
 			\draw[line width=0.5mm] (3,3)--(3,1)--(2,1)--(2,-1)--(1,-1)--(1,-2)--(0,-2)--(0,-3)--(-2,-3);
 			
 			\draw[thick,->](2.5,2.5) -- (3,2.5);
 			\draw[thick,->](2.5,1.5) -- (3,1.5);
 			\draw[thick,->](1.5,0.5) -- (2,0.5);
 			\draw[thick,->](1.5,-0.5) -- (1.5,0.5);
 			\draw[thick,->](0.5,0.5) -- (1.5,0.5);
 			\draw[thick,->](2.5,1) -- (2.5,1.5);
 			\draw[thick,->](2.5,1.5) -- (2.5,2.5);
 			\draw[thick,->](0.5,-0.5) -- (0.5,0.5);
 			\draw[thick,->](-1.5,2.5) -- (2.5,2.5);
 			\draw[thick,->](1.5,-1) -- (1.5,-0.5);
 			\draw[thick,->](1.5,-0.5) -- (2,-0.5);
 			\draw[thick,->](0.5,-2) -- (0.5,-1.5);
 			\draw[thick,->](0.5,-1.5) -- (1,-1.5);
 			\draw[thick,->](0.5,-1.5) -- (0.5,-0.5);
 			\draw[thick,->](0.5,-0.5) -- (1.5,-0.5);
 			\draw[thick,->](-0.5,-3) -- (-0.5,-2.5);
 			\draw[thick,->](-0.5,-2.5) -- (0,-2.5);
 			\draw[thick,->](-0.5,-2.5) -- (-0.5,-1.5);
 			\draw[thick,->](-0.5,-1.5) -- (0.5,-1.5);
 			\draw[thick,->](-1.5,-3) -- (-1.5,0.5);
 			\draw[thick,->](-1.5,0.5) -- (-1.5,2.5);
 			\draw[thick,->](-1.5,0.5) -- (0.5,0.5);
 			
 			\draw(3,2.5) node[right] {$v_{1}$};
 			\draw(3,1.5) node[right] {$v_{2}$};
 			\draw(2.5,1) node[below] {$h_{1}$};
 			\draw(2,0.5) node[anchor=north west] {$v_{3}$};
 			\draw(2,-0.5) node[right] {$v_{4}$};
 			\draw(1.5,-1) node[below] {$h_{2}$};
 			\draw(1,-1.5) node[anchor=north west] {$v_{5}$};
 			\draw(0.5,-2) node[below] {$h_{3}$};
 			\draw(0,-2.5) node[anchor=north west] {$v_{6}$};
 			\draw(-0.5,-3) node[below] {$h_{4}$};
 			\draw(-1.5,-3) node[below] {$h_{5}$};
 			
 			\foreach \Point in {(-0.5,-3),(0.5,-2),(0,-2.5),(1.5,-1),(2,0.5),(2.5,2.5),(3,2.5),(3,1.5),(2.5,1),(2.5,1.5),(1.5,0.5),(0.5,0.5),(-1.5,2.5),(-1.5,0.5),(1,-1.5),(2,-0.5),(-1.5,-3),(0.5,-0.5),(1.5,-0.5),(0.5,-1.5),(-0.5,-1.5),(-0.5,-2.5)}{
 				\node at \Point {\textbullet};}
 		\end{tikzpicture} 
 		\captionof{figure}{}
 		\label{case1}
\end{center}
This positroid has two sinks that are linked to only one source each. Namely $h_1$ is the only source linked to $v_2$ and $h_4$ is the only source linked to $v_6$. Since $v_1$ is the highest sink $h_1$ is linked to, the coline $L_1 =  cl(V \setminus \{v_1 , v_2\})$ is a good coline. It is easy to check that the copoints $H_1 = L_1 \cup v_2$ and $H_2 = L_1 \cup h_1$ are simple copoints on $L_1$. Similarly, since $v_3$ is the highest sink $h_4$ is linked to, the coline $L_2 = cl(V \setminus \{v_3 , v_6\})$ is a good coline. It is easy to check that the copoints $H_3 = L_2 \cup v_6$ and $H_4 = L_2 \cup h_4$ are simple copoints on $L_2$. 
  \end{ex}
 
\textbf{Case 2:} For all $j \in [r]$ there are at least two sources linked to the sink $v_j$.

Let $v_i$ and $v_{i+1}$ be the last two consecutive sinks (not separated by a source) of $\mathcal{P}$ which exist as $v_1$ and $v_2$ are two consecutive sinks. We consider here two subcases.
\begin{itemize}
	\item Subcase 2.1: There exist two consecutive sources (not separated by a sink) $h_{t}^j$ and $h_{t+1}^j$ such that $j > i$. 
	
	Let $h_{t}^j$ and $h_{t+1}^j$ be the first two consecutive sources such that $j > i$, and let $w$ be the internal vertex directly above $h_{t}^j$.
	Since $h_{t}^j$ is not the only source linked to $v_j$, $h_{t+1}^j$ must also be linked to $v_j$. Since $v_i$ and $v_{i+1}$ are the last two consecutive sinks and since $h_{t}^j$ and $h_{t+1}^j$ are the first two consecutive sources such that $j>i$, the steps of the lattice path between $v_{i+1}$ and $h_{t}^j$ (if they exist) are steps alternating between a source and a sink. Moreover, since every source should be linked to at least two sinks, $h^{i+2}$ must be linked to $v_{i+2}$ and a higher sink. If $h^{i+2}$ was not linked to $v_{i+1}$, any other source to the left of $h^{i+2}$ could not be linked to $v_{i+1}$ as well, due to the planarity of the \reflectbox{L}-graph. If this was the case, $h^{i+1}$ would be the only source linked to $v_{i+1}$ and we are now back to Case 1. Therefore, $h^{i+2}$ must be linked to $v_{i+1}$. Similarly, $h^{i+3}$ must be linked to $v_{i+2}$ etc., and $h_{t}^j$ must be linked to $v_{j-1}$. In other words, $h^{s}$ must be linked to $v_{s-1}$ for all $s$ such that $i < s \leq j$. We call this \emph{the connection property} between $v_{i+1}$ and $h_{t}^j$. It follows that $h_{t}^j$ is linked to $v_i$. Note that this still holds if there are no steps between $v_{i+1}$ and $h_{t}^j$. We also know that not all paths starting from $h_{t+1}^j$ go through $w$, otherwise $h_{t}^j$ and $h_{t+1}^j$ are two parallel elements. We now have two possibilities.
	\begin{itemize}
		\item $h_{t+1}^j$ is linked to $v_i$ through a path avoiding $w$.
		
		We prove in this case that $L=cl (V \setminus \{v_i , v_j\})$ is a good coline. The set $H_1 = L \cup v_j$ is of rank $r-1$. The source $h_{t}^j$ is linked to $v_j$ and $v_i$ and therefore every source that is linked to $v_j$ is also linked to $v_i$ through the internal vertex $w$. That is because every source that is linked to $v_j$ must go through $w$, and starting from $w$ we can find a path to $v_i$ (using the path from $h_{t}^j$ to $v_i$). We now have $r(H_{1} \cup h_{t}^j)=r$ and $r(H_{1} \cup v_i)=r$ and the addition of any other source to $H_1$ will increase its rank. Therefore $H_1$ is closed and a simple copoint on $L$. Similarly, $H_2= L \cup h_{t}^j$ is a simple copoint on $L$. It is easy to check that $r(H_2)=r-1$ and that $r(H_2 \cup h_{t+1}^j)=r$. Since $h_{t+1}^j$ is linked to $v_i$ through a path avoiding $w$, any other source linked to both $v_j$ and $v_i$ also have a path avoiding $w$ due to the planarity of the \reflectbox{L}-graph. Thus, adding any of these sources to $H_2$ will increase its rank. Note here that the set $L \cup v_i$ might not be closed since all sources between $v_{i+1}$ and $h_{t}^j$ (if they exist), are linked to $v_{i}$ and not to $v_j$. Figure \ref{case2.1.1} gives an example of this case.
		
		\item $h_{t+1}^j$ is not linked to $v_i$ through a path avoiding $w$.
		
		We prove in this case that $L=cl (V \setminus \{v_i , v_{i+1}\})$ is a good coline. As mentioned earlier, not all paths starting from $h_{t+1}^j$ go through $w$. It is easy to notice that if $h_{t+1}^j$ was linked to a sink lower than $v_i$ (a sink $v_t$ such that $t > i$) through a path avoiding $w$ then it is also be linked to $v_i$ through a path avoiding $w$. That is due to the planarity of the \reflectbox{L}-graph and the connection property. So here we are back to the previous possibility. Therefore $h_{t+1}^j$ must be linked to a sink higher than $v_i$ through a path $p$ avoiding $w$. This path $p$ blocks any source to the left of $h_{t+1}^j$ from reaching $v_i$ (because of the planarity property). This means that the only sources linked to $v_i$ are the sources between $v_{i+1}$ and $h_{t}^j$, and that due to the connection property, any source linked to $v_{i+1}$ is also linked to $v_i$. Therefore the set $H_1 = L \cup v_i$ is closed and of rank $r-1$. Similarly, the set $H_1 = L \cup v_{i+1}$ is closed and of rank $r-1$. Thus $H_1$ and $H_2$ are two simple copoints on $L$. Note that the set $L \cup h^{i+1}$ is not necessarily closed since it might contain other sources linked to $v_i$ but intersecting the path from $h^{i+1}$ to $v_{i+1}$. Figure \ref{case2.1.2} gives an example of this case.
		
		\begin{center}
			\begin{minipage}{.4\textwidth}
			\begin{tikzpicture}[scale=1]
				\draw[help lines,line width=.4pt,step=1] (-2,-1) grid (0,-3);
				\draw[help lines,line width=.4pt,step=1] (-2,1) grid (2,-1);
				
				\draw[line width=0.5mm] (2,1)--(2,-1)--(1,-1)--(1,-2)--(0,-2)--(0,-3)--(-2,-3);
				
				\draw[thick,->](1.5,0.5) -- (2,0.5);
				\draw[thick,->](1.5,-0.5) -- (1.5,0.5);
				\draw[thick,dashed](0.5,-0.5) -- (0.5,0.5);
				\draw[thick,->](1.5,-1) -- (1.5,-0.5);
				\draw[thick,->](1.5,-0.5) -- (2,-0.5);
				\draw[thick,->](0.5,-2) -- (0.5,-1.5);
				\draw[thick,->](0.5,-1.5) -- (1,-1.5);
				\draw[thick,->](0.5,-1.5) -- (0.5,-0.5);
				\draw[thick,->](0.5,-0.5) -- (1.5,-0.5);
				\draw[thick,->](-0.5,-3) -- (-0.5,-2.5);
				\draw[thick,->](-0.5,-2.5) -- (0,-2.5);
				\draw[thick,->](-0.5,-2.5) -- (-0.5,-1.5);
				\draw[thick,->](-0.5,-1.5) -- (0.5,-1.5);
				\draw[thick,->](-1.5,-3) -- (-1.5,-2.5);
				\draw[thick,->](-1.5,-2.5) -- (-0.5,-2.5);
				\draw[thick,->](-1.5,-2.5) -- (-1.5,-0.5);
				\draw[thick,->](-1.5,-0.5) -- (0.5,-0.5);
				\draw[thick,dashed](1.5,0.5) -- (1.5,1.5);
				\draw[thick,dashed](-1.5,-2.5) -- (-2.5,-2.5);
				\draw[thick,dashed](-1.5,-0.5) -- (-2.5,-0.5);
				\draw[thick,dashed](-1.5,-0.5) -- (-1.5,1);

				\draw(2,0.5) node[right] {$v_{i}$};
				\draw(2,-0.5) node[right] {$v_{i+1}$};
				\draw(1.5,-1) node[anchor=north west] {$h^{i+1}$};
				\draw(1,-1.5) node[anchor=north west] {$v_{j-1}$};
				\draw(0.5,-2) node[anchor=north west] {$h^{j-1}$};
				\draw(0,-2.5) node[anchor=north west] {$v_{j}$};
				\draw(-0.5,-3) node[below] {$h_{t}^j$};
				\draw(-1.5,-3) node[below] {$h_{t+1}^j$};
				\draw(-0.5,-2.5) node[anchor=south east] {$w$};
				
				\foreach \Point in {(-0.5,-3),(0.5,-2),(0,-2.5),(1.5,-1),(2,0.5),(1.5,0.5),(-1.5,-2.5),(-1.5,-0.5),(1,-1.5),(2,-0.5),(-1.5,-3),(0.5,-0.5),(1.5,-0.5),(0.5,-1.5),(-0.5,-1.5),(-0.5,-2.5)}{
					\node at \Point {\textbullet};}
			\end{tikzpicture} 
			\captionof{figure}{}
			\label{case2.1.1}
		\end{minipage}\hspace{1cm}
		\begin{minipage}{.4\textwidth}
			\begin{tikzpicture}[scale=1]
				\draw[help lines,line width=.4pt,step=1] (-2,-1) grid (0,-3);
				\draw[help lines,line width=.4pt,step=1] (-2,1) grid (2,-1);
				
				\draw[line width=0.5mm] (2,1)--(2,-1)--(1,-1)--(1,-2)--(0,-2)--(0,-3)--(-2,-3);
				
				\draw[thick,->](1.5,0.5) -- (2,0.5);
				\draw[thick,->](1.5,-0.5) -- (1.5,0.5);
				\draw[thick,dashed](0.5,-0.5) -- (0.5,0.5);
				\draw[thick,->](1.5,-1) -- (1.5,-0.5);
				\draw[thick,->](1.5,-0.5) -- (2,-0.5);
				\draw[thick,->](0.5,-2) -- (0.5,-1.5);
				\draw[thick,->](0.5,-1.5) -- (1,-1.5);
				\draw[thick,->](0.5,-1.5) -- (0.5,-0.5);
				\draw[thick,->](0.5,-0.5) -- (1.5,-0.5);
				\draw[thick,->](-0.5,-3) -- (-0.5,-2.5);
				\draw[thick,->](-0.5,-2.5) -- (0,-2.5);
				\draw[thick,->](-0.5,-2.5) -- (-0.5,-1.5);
				\draw[thick,->](-0.5,-1.5) -- (0.5,-1.5);
				\draw[thick,->](-1.5,-3) -- (-1.5,-2.5);
				\draw[thick,->](-1.5,-2.5) -- (-0.5,-2.5);
				\draw[thick](-1.5,-2.5) -- (-1.5,1.5);
				\draw[thick,dashed](1.5,0.5) -- (1.5,1.5);
				\draw[thick,dashed](-0.5,-1.5) -- (-0.5,-0.5);
				\draw[thick,dashed](-1.5,-2.5) -- (-2.5,-2.5);

				\draw(2,0.5) node[right] {$v_{i}$};
				\draw(2,-0.5) node[right] {$v_{i+1}$};
				\draw(1.5,-1) node[anchor=north west] {$h^{i+1}$};
				\draw(1,-1.5) node[anchor=north west] {$v_{j-1}$};
				\draw(0.5,-2) node[anchor=north west] {$h^{j-1}$};
				\draw(0,-2.5) node[anchor=north west] {$v_{j}$};
				\draw(-0.5,-3) node[below] {$h_{t}^j$};
				\draw(-1.5,-3) node[below] {$h_{t+1}^j$};
				\draw(-0.5,-2.5) node[anchor=south east] {$w$};
				
				\foreach \Point in {(-0.5,-3),(0.5,-2),(0,-2.5),(1.5,-1),(2,0.5),(1.5,0.5),(-1.5,-2.5),(1,-1.5),(2,-0.5),(-1.5,-3),(0.5,-0.5),(1.5,-0.5),(0.5,-1.5),(-0.5,-1.5),(-0.5,-2.5)}{
					\node at \Point {\textbullet};}
			\end{tikzpicture} 
			\captionof{figure}{}
			\label{case2.1.2}
		\end{minipage}
		\end{center}
	\end{itemize}
	
\item Subcase 2.2: There exists no two consecutive sources $h_{t}^j$ and $h_{t+1}^j$ such that $j > i$.

In this case, we prove that $L= cl (V \setminus \{v_i , v_r\})$ is a good coline. Since there exist no two consecutive sources $h_{t}^j$ and $h_{t+1}^j$ such that $j > i$, the connection property holds between $v_{i+1}$ and $h^r$. It follows that $h^r$ is linked to $v_i$, and $h^r$ is the last element in the positroid. It is not hard to check that $H_1 = L \cup v_r$ and $H_2 = L \cup h^r$ are both closed and of rank $r-1$. Therefore, $H_1$ and $H_2$ are two simple copoints on $L$. Figure \ref{case2.2} is an example of this case.

\begin{center}
\begin{tikzpicture}[scale=1]
	\draw[help lines,line width=.4pt,step=1] (-2,-1) grid (0,-3);
	\draw[help lines,line width=.4pt,step=1] (-2,1) grid (2,-1);
	\draw[help lines,line width=.4pt,step=1] (-2,-3)--(-2,-4);
	
	\draw[line width=0.5mm] (2,1)--(2,-1)--(1,-1)--(1,-2)--(0,-2)--(0,-3)--(-1,-3)--(-1,-4)--(-2,-4);
	
	\draw[thick,->](1.5,0.5) -- (2,0.5);
	\draw[thick,->](1.5,-0.5) -- (1.5,0.5);
	\draw[thick,dashed](0.5,-0.5) -- (0.5,0.5);
	\draw[thick,->](1.5,-1) -- (1.5,-0.5);
	\draw[thick,->](1.5,-0.5) -- (2,-0.5);
	\draw[thick,->](0.5,-2) -- (0.5,-1.5);
	\draw[thick,->](0.5,-1.5) -- (1,-1.5);
	\draw[thick,->](0.5,-1.5) -- (0.5,-0.5);
	\draw[thick,->](0.5,-0.5) -- (1.5,-0.5);
	\draw[thick,->](-0.5,-3) -- (-0.5,-2.5);
	\draw[thick,->](-0.5,-2.5) -- (0,-2.5);
	\draw[thick,->](-0.5,-2.5) -- (-0.5,-1.5);
	\draw[thick,->](-0.5,-1.5) -- (0.5,-1.5);
	\draw[thick,->](-1.5,-3.5) -- (-1.5,-2.5);
	\draw[thick,->](-1.5,-3.5) -- (-1,-3.5);
	\draw[thick,->](-1.5,-4) -- (-1.5,-3.5);
	\draw[thick,->](-1.5,-2.5) -- (-0.5,-2.5);
	\draw[thick,dashed](-1.5,-2.5) -- (-1.5,-1.5);
	\draw[thick,dashed](1.5,0.5) -- (1.5,1.5);
	\draw[thick,dashed](-0.5,-1.5) -- (-0.5,-0.5);

	\draw(2,0.5) node[right] {$v_{i}$};
	\draw(2,-0.5) node[right] {$v_{i+1}$};
	\draw(-1.5,-4) node[below] {$h^r$};
	\draw(-1,-3.5) node[right] {$v_r$};

	\foreach \Point in {(-0.5,-3),(-1,-3.5),(-1.5,-3.5),(0.5,-2),(0,-2.5),(1.5,-1),(2,0.5),(1.5,0.5),(-1.5,-2.5),(1,-1.5),(2,-0.5),(-1.5,-4),(0.5,-0.5),(1.5,-0.5),(0.5,-1.5),(-0.5,-1.5),(-0.5,-2.5)}{
		\node at \Point {\textbullet};}
\end{tikzpicture} 
\captionof{figure}{}
\label{case2.2}
\end{center}	
	
\end{itemize}
\end{proof}

We put this result in the context of the results from Goddyn et al.\cite{goddynetal} and recall the definition of a GSP- oriented matroid that was introduced there. We say that an oriented matorid $\mathcal{O}$ is a \emph{generalized series-parallel} (GSP) if every simple minor of $\mathcal{O}$ has a $\{0, \pm 1\}$-valued coflow which has
at most two nonzero entries. In the same work, the authors show that every GSP-oriented matroid with no loops has a NZ-3 coflow, i.e.\ is 3-colorable (see appendix). Additionally, Hochst\"{a}ttler and Guzman-Pro recently proved the following proposition.



\begin{prop}[Dual version of Proposition 16 \cite{santiago}] \label{santiago}
	Let $\mathcal{C}$ be a minor closed class of orientable matroids. If every simple matroid in $\mathcal{C}$ has a good coline, then every orientation of a matroid in $\mathcal{C}$ is GSP.
\end{prop}

Since positroids are closed under taking minors, Proposition \ref{santiago} along with Theorem \ref{goodcoline} imply that the class of simple positroids is a subclass of the class of GSP-oriented matroids. Thus, the main result follows immediately.




\begin{thm}\label{maintheorem}
	Simple positroids of rank at least $2$ are 3-colorable.
\end{thm}

\section{Conclusion}
Originally our plan was to prove that every simple gammoid of rank at least $2$ is 3-colorable by proving the existence of a positive coline, but as mentioned in the introduction, this was proven to be false by Guzman-Pro and Hochst\"{a}ttler in \cite{santiago}. Nevertheless, the authors showed in the same work that we don't necessarily need a positive coline to prove the 3-colorability of an oriented matroid, we only need a coline with at least two simple copoints, and in this way they proved that cobicircular matroids are 3-colorable. This leads to the following question which, if answered positively, would prove the 3-colorability of gammoids since gammoids are closed under minors.

\begin{que}
	Does every simple gammoid have a good coline?
\end{que}


\bibliographystyle{plain}
\bibliography{3color}

\appendix

\section*{Appendix}

As mentioned in the introduction, Hochst\"{a}ttler and Nickel introduced in \cite{goddynetal} the notion of coloring to general oriented matroid. We recall here their definition of the chromatic number of an oriented matroid. We assume basic familiarity with matroid theory and oriented matroids, and standars references are \cite{oxleybook} and \cite{ombibel}.

We start by recalling the definition of integer lattices.




\begin{defn}
	For a finite set of non-zero integer vectors $V:= \{v_1 , \dots , v_r\} \subset \mathbb{Z}^n$ let 
	\begin{equation*}
		\text{lat } V = \biggl\{ \sum_{i=1}^{r} \lambda_i v_i \; | \; \lambda_i \in \mathbb{Z} \biggr\}
	\end{equation*}
	denote the \emph{integer lattice generated by} V. For a family $\mathcal{X}$ of signed subsets we denote by lat $\mathcal{X} :=$ lat $\{ \vec{X} \; | \;  X \in \mathcal{X} \}  $ the integer lattice generated by its signed characteristic vectors. 
\end{defn}

The chromatic number $\chi(\mathcal{O})$ of an oriented matroid $\mathcal{O}$ can now be defined using the coflow lattice of the oriented matroid, defined below. 

\begin{defn}
	The \emph{coflow lattice} of an oriented matroid $\mathcal{O}$ on a finite set $E$, denoted as $\mathcal{F_{O^*}}$, is the integer lattice generated by the signed characteristic vectors of the signed cocircuits $\mathcal{D}$ of $\mathcal{O}$, where the \emph{signed characteristic vector} of a signed cocircuit $D=(D^+ , D^-)$ is defined by 
	\begin{equation*}
		\vec{D}(e) =  \begin{cases}
			1 \, \text{if}\, e \in D^+ \\
			-1 \, \text{if}\, e \in D^- \\
			0 \; \text{otherwise}
		\end{cases}
	\end{equation*}
	That is $\mathcal{F_{O^*}}= \biggl\{  \sum_{D \in \mathcal{D}} \lambda_D \vec{D} | \lambda_i \in \mathbb{Z} \biggr\} \subseteq \mathbb{Z}^{|E|}$
	
	We call any $x \in \mathcal{F_{O^*}}$ a coflow of $\mathcal{O}$. Such an $x$ is a \emph{nowhere-zero-k coflow (NZ-k-coflow)} if $0 < |x(e)| < k$ holds for $e \in E$. The \emph{chromatic number $\chi(\mathcal{O})$} is the minimum $k$ such that $\mathcal{O}$ has a NZ-k coflow.
\end{defn}

\end{document}